\newtheorem{theorem}{Theorem}
\newtheorem*{theorem*}{Theorem}
\newtheorem{lemma}[theorem]{Lemma}
\newtheorem{maintheorem}{Theorem}
\theoremstyle{definition}
\newtheorem*{definition*}{Definition}
\newtheorem*{lemma*}{Lemma}
\numberwithin{equation}{section}
\numberwithin{theorem}{section}
\newcommand{\R}{\mathbb{R}}
\newcommand{\PP}{\mathbb{P}}
\DeclareDocumentCommand\Pr{ m g }{\ensuremath{
    {   \IfNoValueTF {#2}
      {\mathbb{P}\mleft[{#1}\mright]}
      {\mathbb{P}\mleft[{#1}\middle\vert{#2}\mright]}%
    }
}}
\DeclareDocumentCommand\E{ m g }{\ensuremath{
    {   \IfNoValueTF {#2}
      {\mathbb{E}\mleft[{#1}\mright]}
      {\mathbb{E}\mleft[{#1}\middle\vert{#2}\mright]}%
    }
}}
\newcommand{\ind}[1]{{\mathbbm{1}_{\mleft\{{#1}\mright\}}}}
\newcommand{\embolden}[1]{\textbf {#1}}
\newcommand{\absf}[1]{|#1|}      
\newcommand{\abs}[1]{\left|#1\right|}
\begin{document}

\title[]{A Fourier approach to Levine's hat puzzle}

\author[]{Steven Heilman}
\address{University of Southern California}
\author[]{Omer Tamuz}
\address{California Institute of Technology}


\date{\today}

\begin{abstract}
We consider Lionel Levine's notorious hat puzzle with two players. Each player has a stack of hats on their head, and each hat is chosen independently to be either black or white. After observing only the other player's hats, players simultaneously choose one of their own hats. The players win if both chosen hats are black. In this note, we observe an upper bound on the probability of success, using Chang's lemma, a result in Boolean harmonic analysis.
\end{abstract}

\maketitle

\section{Introduction}
Fix $n \geq 1$, and let $X_1,\ldots,X_n,Y_1,\ldots,Y_n$ be independent identically distributed random variables with $\Pr{X_{1}=1}=\Pr{X_{1}=-1}=1/2$.  Denote $X = (X_1,\ldots,X_n)$, $Y=(Y_1,\ldots,Y_n)$. The two player version of Lionel Levine's notorious hat puzzle is to find $f,g \colon \{-1,1\}^n \to \{1,\ldots,n\}$ that maximize 
\begin{align}
    \label{eq:both_black}
    U_{f,g} \colonequals \Pr{X_{f(Y)} = Y_{g(X)} = 1}.
\end{align}
The interpretation is that $f,g$ are the players' strategies, and $X_i=1$ if the first player's $i$th hat is black, and $Y_i=1$ if the second player's $i$th hat is black. Hence the expression in \eqref{eq:both_black} is the probability that both players choose a black hat.

We would like to understand how high this probability can be. Let $[n]=\{1,\ldots,n\}$, let
\begin{align*}
    U_n = \max\{U_{f,g}\,:\, f,g \colon \{-1,1\}^n \to [n]\},
\end{align*}
and observe that $U_{n+1} \geq U_n$ for all $n\geq1$. We would like to know $U \colonequals \lim_{n\to\infty}  U_n = \sup_{n\geq1} U_n$.

It was conjectured in \cites{buhler22,alon23} that $U = 7/20=0.35$. In \cites{buhler22} it was shown that $U \geq 7/20$, and, using a simple argument, that $U \leq 3/8=0.375$. Using a computer assisted proof, they improved this upper bound to $U \leq 81/224 \approx 0.3616$. As far as we know, this is the best known upper bound.

In this note we take a Fourier approach to this question. While we do not improve the best known upper bound, our technique yields that $U \leq 0.37406$, without computer assistance (Theorem~\ref{thm:main}). A longer, additional argument leads to the slightly improve bound $U\leq .37193$.

We can write \eqref{eq:both_black} as
\begin{align*}
    U_{f,g} = \E{\frac{1}{2}(X_{f(Y)}+1) \cdot \frac{1}{2}(Y_{g(X)}+1)},
\end{align*}
and since $X$ and $Y$ are independent, $\E{X_{f(Y)}}=0$, and so $U_{f,g}$ can be rewritten as
\begin{align*}
        U_{f,g} = \frac{1}{4}\E{X_{f(Y)} \cdot Y_{g(X)}}+\frac{1}{4}.
\end{align*}
Maximizing this expression over $f,g$ is of course equivalent to maximizing
\begin{align}
    \label{eq:match}
        W_{f,g} \colonequals \E{X_{f(Y)} \cdot Y_{g(X)}}.
\end{align}
This corresponds to a game in which the two players win a dollar if the colors of their chosen hats match, and lose a dollar if they mismatch. This equivalent form will be more convenient for our purposes.

To see the connection to boolean harmonic analysis, given $f,g \colon \{-1,1\}^{n} \to [n]$ and $1\leq i,j\leq n$, denote
\begin{align}\label{zero0}
   F_{ij} &= \E{Y_i \cdot \ind{f(Y)=j}} \nonumber\\
   G_{ij} &= \E{X_i \cdot \ind{g(X)=j}}.
\end{align}
A first observation (Lemma~\ref{olem}) is that
\begin{align}
   \label{eq:FG} \E{X_{f(Y)}Y_{g(X)}}=\sum_{i,j=1}^{n}F_{ij}G_{ji}.
\end{align}
The advantage of this representation is that $F_{ij}$ depends only on $f$, and $G_{ij}$ depends only on $g$. Moreover, these are simply the level one Fourier coefficients of the level sets of $f$ and $g$. Indeed, given a function $\varphi \colon \{-1,1\}^n \to \R$, its Fourier transform $\widehat \varphi \colon 2^{[n]} \to \R$ is given by
\begin{align}
    \label{eq:fourier}
    \widehat\varphi(S) = \E{\varphi(Y) \cdot \prod_{i \in S}Y_i }.
\end{align}
When $S = \{i\}$ is a singleton we write $\widehat \varphi(i) \colonequals \widehat \varphi(\{i\})$. Then $F_{ij} \stackrel{\eqref{zero0}}{=} \widehat \varphi_j(i)$, where $\varphi_j = \ind{f=j}$.  

A useful consequence of \eqref{eq:FG}, which we will exploit, is that it implies that $W_{f,g}$ can be upper bounded using the Cauchy-Schwarz inequality, given estimates of $F_{ij}$ and $G_{ij}$:
\begin{align*}
    \E{X_{f(Y)}Y_{g(X)}} \leq \sqrt{
    \sum_{i,j=1}^{n}F_{ij}^2
    \sum_{i',j'=1}^{n}G_{i'j'}^2
    }.
\end{align*}

\section{Boolean harmonic analysis}
Fix $n \geq 1$, and consider the vector space $\R^{\{-1,1\}^n}$ of functions $h \colon \{-1,1\}^n \to \R$. 
Define an inner product on our vector space by 
\begin{align*}
    \langle h , u \rangle := \E{h(X) \cdot u(X)},\qquad\text{for all}\, h,u\in \R^{\{-1,1\}^n}.
\end{align*}

We will use two important results. The first is Plancherel's Theorem, which states that given $h,u \colon \{-1,1\}^n \to \R$, 
\begin{align}
    \label{eq:plancherel}    
    \langle h,u \rangle = \sum_{S\subseteq[n]} \widehat h(S) \widehat u(S).
\end{align}

The second (and less trivial) result is Chang's inequality.
\begin{lemma}[\embolden{Chang's Inequality}, {\cite{impa14},\cite[page 126]{odonnell14}}]\label{changlemma}
\hfill\\
Let $h\colon\{-1,1\}^{n}\to\{0,1\}$, and denote $\alpha\colonequals\Pr{h(X)=1}$.    Then
\begin{align}
    \label{eq:chang}
  \sum_{k=1}^{n}\absf{\widehat{h}(k)}^{2}\leq 2\alpha^{2}\log(1/\alpha).  
\end{align}
\end{lemma}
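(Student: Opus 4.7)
The plan is to deduce Chang's inequality from the Bonami--Beckner hypercontractive inequality together with a short limit argument. The tool I would invoke is the $(2,1+\rho^2)$-hypercontractive estimate
\[
    \|T_\rho h\|_2 \leq \|h\|_{1+\rho^2}, \qquad \rho \in (0,1),
\]
where $T_\rho$ is the noise operator acting on Fourier coefficients by $\widehat{T_\rho h}(S) = \rho^{|S|}\widehat h(S)$. This is the only external input; everything else is short.

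First I would lower bound the left-hand side by keeping only the Fourier levels $|S|\leq 1$. By Plancherel,
\[
    \|T_\rho h\|_2^2 \;=\; \sum_{S\subseteq [n]}\rho^{2|S|}\widehat h(S)^2 \;\geq\; \widehat h(\emptyset)^2 + \rho^2 \sum_{i=1}^n \widehat h(i)^2,
\]
since every term is nonnegative. Using $\widehat h(\emptyset) = \E{h(X)} = \alpha$, this yields $\alpha^2 + \rho^2 \sum_{i=1}^n \widehat h(i)^2 \leq \|T_\rho h\|_2^2$.

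Next I would compute the right-hand side. Since $h$ is $\{0,1\}$-valued, $h^{1+\rho^2} = h$ pointwise, so
\[
    \|h\|_{1+\rho^2}^{1+\rho^2} = \E{h(X)} = \alpha,
\]
which gives $\|h\|_{1+\rho^2}^2 = \alpha^{2/(1+\rho^2)}$. Combining with hypercontractivity and rearranging,
\[
    \sum_{i=1}^n \widehat h(i)^2 \;\leq\; \frac{\alpha^{2/(1+\rho^2)} - \alpha^2}{\rho^2} \qquad \text{for every } \rho \in (0,1).
\]

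Finally I would let $\rho \to 0^+$. Writing $\alpha^{2/(1+\rho^2)} = \alpha^2 \exp\!\bigl(2\rho^2\log(1/\alpha)/(1+\rho^2)\bigr)$ and Taylor expanding in $\rho^2$, the right-hand side converges to $2\alpha^2\log(1/\alpha)$, which is the desired bound. The main obstacle is really the hypercontractive inequality itself, a nontrivial result usually established by induction on $n$ from a sharp two-point estimate due to Bonami; once it is available as a black box, Chang's inequality is only a few lines. A noteworthy point is that the tight constant $2$ is obtained precisely by passing to the limit $\rho \to 0^+$: cruder arguments based on H\"older's inequality together with the degree-$1$ hypercontractive norm bound $\|g\|_q \leq \sqrt{q-1}\,\|g\|_2$ optimize at $q \asymp \log(1/\alpha)$ and lose an unwanted factor of $e$ in the constant.
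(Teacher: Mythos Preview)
Your argument is correct. The paper does not supply its own proof of Chang's inequality; it simply cites it from \cite{impa14} and \cite[page~126]{odonnell14}. What you have written is essentially the standard hypercontractivity derivation that appears in O'Donnell's book: bound $\|T_\rho h\|_2^2$ below by its degree-$\leq 1$ part, bound it above by $\|h\|_{1+\rho^2}^2=\alpha^{2/(1+\rho^2)}$ using that $h$ is $\{0,1\}$-valued, and send $\rho\to 0$ to extract the sharp constant $2$. Your Taylor computation of the limit is fine, and your remark that optimising the weaker degree-$1$ moment bound at $q\asymp\log(1/\alpha)$ loses a factor of $e$ is also accurate.

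For context, the other reference the paper cites, \cite{impa14}, gives a genuinely different proof based on the entropy method (Pinsker-type inequalities and the chain rule for KL divergence) rather than hypercontractivity; both routes yield the same sharp constant.
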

Here $\log$ denotes the natural logarithm.

The bound of \eqref{eq:chang} is tight for small $\alpha$ tending to zero. Another bound is useful for large $\alpha$:
\begin{lemma}\label{olem2}
Let $h\colon\{-1,1\}^{n}\to\{0,1\}$, and denote $\alpha\colonequals\Pr{h(X)=1}$.    Then
\begin{align*}
  \sum_{k=1}^{n}\absf{\widehat{h}(k)}^{2}\leq \alpha/2.  
\end{align*}
\end{lemma}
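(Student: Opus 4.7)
The plan is to exploit the sign-flip symmetry $X \mapsto -X$ by passing to the odd part of $h$. Define $\psi(x) \colonequals h(x) - h(-x)$, which takes values in $\{-1,0,1\}$. A short computation shows that for every $S \subseteq [n]$,
\[
    \widehat{\psi}(S) = \bigl(1-(-1)^{|S|}\bigr)\,\widehat{h}(S),
\]
obtained by splitting $\widehat{\psi}(S) = \widehat{h}(S) - \E{h(-X)\prod_{i\in S}X_i}$ and substituting $Y \colonequals -X$ in the second term, which picks up a factor of $(-1)^{|S|}$ from $\prod_{i\in S}(-Y_i)$. In particular, since singletons have odd size, $\widehat{\psi}(k) = 2\,\widehat{h}(k)$ for each $k \in [n]$.

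Next, I apply Plancherel's identity \eqref{eq:plancherel} to $\psi$, retaining only the level-one coefficients on the Fourier side:
\[
    \sum_{k=1}^{n}\widehat{h}(k)^{2} \;=\; \tfrac{1}{4}\sum_{k=1}^{n}\widehat{\psi}(k)^{2} \;\leq\; \tfrac{1}{4}\sum_{S\subseteq[n]}\widehat{\psi}(S)^{2} \;=\; \tfrac{1}{4}\E{\psi(X)^{2}}.
\]

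It remains to show that $\E{\psi(X)^{2}} \leq 2\alpha$. Because $h(X),h(-X) \in \{0,1\}$, the pointwise identities $(a-b)^{2} = |a-b|$ and $|a-b| \leq a+b$ (valid for $a,b\in\{0,1\}$) give $\psi(X)^{2} \leq h(X) + h(-X)$. Since $X$ and $-X$ are identically distributed, taking expectations yields $\E{\psi(X)^{2}} \leq 2\alpha$. Combining with the previous display gives $\sum_{k=1}^{n}\widehat{h}(k)^{2} \leq \alpha/2$, as required.

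I do not anticipate any significant obstacle. The entire argument hinges on the elementary observation that level-one Fourier coefficients are carried by the odd part of $h$ under the antipodal symmetry, together with the fact that for a $\{0,1\}$-valued function the $L^{2}$-norm of this odd part is dominated by twice its mean. The only computation to handle with care is the sign bookkeeping in the Fourier identity for $\widehat{\psi}(S)$.
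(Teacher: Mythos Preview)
Your proof is correct and is essentially the same as the paper's: the paper sets $u(x)=h(-x)$ and computes $\langle h,h-u\rangle$ via Plancherel, while you set $\psi=h-u$ and compute $\langle\psi,\psi\rangle$; the key inequality in both cases reduces to $h(x)h(-x)\geq 0$. Both versions in fact yield the stronger bound $\sum_{|S|\text{ odd}}|\widehat{h}(S)|^{2}\leq\alpha/2$, which the paper records separately as \eqref{bonus}.
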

\begin{proof}
    Let $u(x) = h(-x)$, and note that $\widehat{u}(S) = (-1)^{|S|}\widehat h(S)$. Hence, by Plancherel \eqref{eq:plancherel},
    \begin{align*}
        \langle h, u \rangle = \sum_{S\subseteq[n]} (-1)^{|S|} \absf{\widehat h(S)}^2.
    \end{align*}
    It follows (again by Plancherel) that
    \begin{align}
        \label{eq:hdif}
        \langle h, h-u \rangle = 2\sum_{S\subseteq[n]\colon|S|\text{ odd}} \absf{\widehat h(S)}^2.
    \end{align}
    Since $\absf{h}^2=h$, 
    \begin{align*}
        \alpha = \E{h(X)} = \E{h(X) \cdot h(X)} = \langle h, h\rangle.
    \end{align*}
    Additionally, $\langle h,u\rangle\geq0$ since $h,u\geq0$.  Thus, 
    \begin{align*}
        \sum_{S\subseteq[n]\colon|S|\text{ odd}} \absf{\widehat h(S)}^2 \stackrel{\eqref{eq:hdif}}{\leq} \frac{1}{2}\big(\langle h,h\rangle - \langle h,u\rangle\big) 
        \leq \langle h,h\rangle/2\leq  \alpha/2.
    \end{align*}
    Finally, since $S=\{k\}$ is of odd size, we have that 
    \begin{align*}
        \sum_{k=1}^{n}\absf{\widehat{h}(k)}^{2} \leq \sum_{S\subseteq[n]\colon|S|\text{ odd}} \absf{\widehat h(S)}^2 \leq\alpha/2.
    \end{align*}
\end{proof}

Putting these lemmas together we have that for any $h \colon \{-1,1\}^n \to \{0,1\}$ and $\alpha = \Pr{h(X)=1}$ it holds that
\begin{align}
    \label{eq:chang3}
      \sum_{k=1}^{n}\absf{\widehat{h}(k)}^{2}\leq \min\{2\alpha^{2}\log(1/\alpha),\,\alpha/2\}.
\end{align}

\section{Proofs}
Recall that
\begin{align*}
    U = \sup_{n\geq1} \max_{f,g \colon \{-1,1\}^n\to[n]}\E{\frac{1}{2}(X_{f(Y)}+1) \cdot \frac{1}{2}(Y_{g(X)}+1)}.
\end{align*}
Our main result is the following.
\begin{maintheorem}
\label{thm:main}
$U \leq 0.37406$.
\end{maintheorem}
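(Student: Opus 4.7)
The plan is to combine the Cauchy--Schwarz inequality from the end of Section~1,
\[ W_{f,g} = \sum_{i,j} F_{ij} G_{ji} \leq \sqrt{\Bigl(\sum_{i,j} F_{ij}^2\Bigr)\Bigl(\sum_{i,j} G_{ij}^2\Bigr)},\]
with the bound \eqref{eq:chang3}. Writing $\alpha_j \colonequals \Pr{f(Y) = j}$ and $\beta_j \colonequals \Pr{g(X) = j}$, the indicator $\varphi_j = \ind{f=j}$ has $\E{\varphi_j} = \alpha_j$, so \eqref{eq:chang3} yields $\sum_{i=1}^n F_{ij}^2 \leq \phi(\alpha_j)$, where $\phi(\alpha) \colonequals \min\{2\alpha^2 \log(1/\alpha),\, \alpha/2\}$, and analogously for $G$ with $\beta_j$. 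Summing over $j$ gives
\[ W_{f,g} \leq \sqrt{\Bigl(\sum_j \phi(\alpha_j)\Bigr)\Bigl(\sum_j \phi(\beta_j)\Bigr)}. \]

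The problem thus reduces to bounding $M \colonequals \sup \sum_j \phi(\alpha_j)$ over probability distributions $(\alpha_j)$. The trivial estimate $\phi(\alpha)\leq\alpha/2$ gives $M \leq 1/2$ and hence only $U \leq 3/8$. The crucial analytic fact is that $\phi(\alpha) = \alpha/2$ holds only on the interval $[\alpha_1, \alpha_2]$ where the two branches of \eqref{eq:chang3} coincide --- numerically $\alpha_1 \approx 0.116$ and $\alpha_2 \approx 0.700$ --- while $\phi(\alpha) < \alpha/2$ strictly elsewhere. Hence any profile with $\sum_j \phi(\alpha_j)$ close to $1/2$ must have all nonzero $\alpha_j$ lying in this interval, which caps the effective support size at $\lfloor 1/\alpha_1 \rfloor = 8$ and reduces the problem to a finite-dimensional optimization.

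The main obstacle is that profiles such as $\alpha = \beta = (1/2,1/2)$ do attain $\sum_j \phi(\alpha_j) = 1/2$ exactly, so the pure Cauchy--Schwarz/Chang estimate as stated gives only $W_{f,g} \leq 1/2$ and $U\leq 3/8$. To cross the $3/8$ barrier I would instead apply Cauchy--Schwarz one index at a time, writing
\[ W_{f,g} \leq \sum_j \sqrt{\phi(\alpha_j)}\,\sqrt{d_j}, \qquad d_j \colonequals \sum_i G_{ji}^2,\]
and control each $d_j$ simultaneously by the aggregate bound $\sum_j d_j \leq \sum_i \phi(\beta_i) \leq 1/2$ and the termwise estimate $|G_{ji}| \leq \beta_i$. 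These couple the $(\alpha_j)$ and $(\beta_j)$ distributions, and the hard step is to extract, by convexity or Lagrange multipliers over this low-dimensional domain, the numerical value $0.49624 = 4\cdot 0.37406 - 1$ as the supremum of the coupled expression. This final optimization is where I expect the real work to lie.
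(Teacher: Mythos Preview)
Your diagnosis of the obstruction is exactly right: for any profile $(\alpha_j)$ supported in $[\alpha',1/2]$ (with $\alpha'\approx 0.116$) one has $\sum_j\phi(\alpha_j)=\tfrac12$ exactly, so the global Cauchy--Schwarz/Chang bound alone cannot improve on $W\le 1/2$. But your proposed repair does not escape this. The refined bound
\[
W_{f,g}\ \le\ \sum_j \sqrt{\phi(\alpha_j)}\,\sqrt{d_j},\qquad d_j=\sum_i G_{ji}^2,
\]
together with the constraints $\sum_j d_j\le \tfrac12$ and $d_j\le\sum_i\beta_i^2$, is still saturated at $1/2$: take $\alpha=\beta=(1/2,1/2)$ and $d_1=d_2=1/4$, giving $\tfrac12(\sqrt{1/4}+\sqrt{1/4})=\tfrac12$. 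More generally, whenever $\sum_j\phi(\alpha_j)=\tfrac12$, the supremum of $\sum_j\sqrt{\phi(\alpha_j)}\sqrt{d_j}$ subject to $\sum_j d_j\le\tfrac12$ is again $\tfrac12$ (set $d_j\propto\phi(\alpha_j)$), and the termwise bound $d_j\le\sum_i\beta_i^2$ is too weak to prevent this. No finite-dimensional optimization over these relaxed quantities will produce a number below $1/2$; the level-one Fourier data are exhausted in the concentrated regime.

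The paper closes this gap with a second, non-Fourier ingredient (Lemma~\ref{lemma7}). From the symmetry identity
\[
4W_{f,g}=\E{(X_{f(Y)}-X_{f(-Y)})(Y_{g(X)}-Y_{g(-X)})}
\]
one conditions on the event $B_Y=\{Y_1=\cdots=Y_{k-1}\}$: if $\Pr{g(X)\ge k}\le\epsilon$ then $Y_{g(X)}=Y_{g(-X)}$ on $B_Y$ except with probability $\le 2\epsilon$, which yields $W_{f,g}\le\tfrac12-(1-4\epsilon)2^{-k+1}$. This handles exactly the concentrated profiles your Fourier argument cannot touch. The proof of Theorem~\ref{thm:main} then splits: if either $f$ or $g$ puts mass $<0.009079$ outside its top $8$ values, apply Lemma~\ref{lemma7} with $k=9$; otherwise the $9$th-largest level set has mass at most $1/9<\alpha'$, so the Chang branch of $\phi$ is genuinely active on the tail, and your Cauchy--Schwarz computation (carried out in Lemma~\ref{lemma6}) gives $\|F\|^2\le 0.49624$. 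The threshold $0.009079$ is chosen so the two cases yield the same bound.
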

In fact, with some additional effort we improve this bound to $U\leq .37193$ in Section \ref{secsmall}.

It will be more convenient to study
\begin{align*}
    W = 4U-1 = \sup_{n\geq1} \max_{f,g \colon \{-1,1\}^n\to[n]}\E{X_{f(Y)} \cdot Y_{g(X)}}.
\end{align*}
In terms of $W$, Theorem~\ref{thm:main} states that $W \leq 0.496235$. We prove this theorem in the remainder of this note.

We begin by showing \eqref{eq:FG}.
\begin{lemma}\label{olem}
For any $f,g \colon\{-1,1\}^{n} \to \{1,\ldots,n\}$ we have
$$\E{X_{f(Y)}Y_{g(X)}}=\sum_{i,j=1}^{n}F_{ij}G_{ji}.$$
\end{lemma}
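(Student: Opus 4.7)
The plan is to expand $X_{f(Y)}$ and $Y_{g(X)}$ using indicator functions of the level sets of $f$ and $g$, and then exploit the independence of $X$ and $Y$ to factor the resulting expectations. The calculation is routine; there is no real obstacle, only bookkeeping with indices.

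First I would write the tautology
\begin{align*}
    X_{f(Y)} = \sum_{i=1}^{n} X_i \,\ind{f(Y)=i}, \qquad Y_{g(X)} = \sum_{j=1}^{n} Y_j \,\ind{g(X)=j},
\end{align*}
which holds since exactly one indicator on each right-hand side is nonzero. Multiplying and taking expectations gives
\begin{align*}
    \E{X_{f(Y)} Y_{g(X)}} = \sum_{i,j=1}^{n} \E{X_i \,Y_j\, \ind{f(Y)=i}\, \ind{g(X)=j}}.
\end{align*}

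Next I would observe that the random variable $X_i \ind{g(X)=j}$ depends only on $X$, while $Y_j \ind{f(Y)=i}$ depends only on $Y$. Since $X$ and $Y$ are independent, each summand factors as
\begin{align*}
   \E{X_i \ind{g(X)=j}} \cdot \E{Y_j \ind{f(Y)=i}} = G_{ij}\cdot F_{ji},
\end{align*}
using the definitions \eqref{zero0}. Finally, renaming the summation indices ($i \leftrightarrow j$) yields
\begin{align*}
   \E{X_{f(Y)} Y_{g(X)}} = \sum_{i,j=1}^{n} F_{ji} G_{ij} = \sum_{i,j=1}^{n} F_{ij} G_{ji},
\end{align*}
which is the desired identity. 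The only place one needs to be careful is the use of independence in the factorization step, but since each factor involves disjoint coordinates this is immediate.
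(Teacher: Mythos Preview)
Your proof is correct and follows essentially the same route as the paper: expand via indicators of the level sets, factor using the independence of $X$ and $Y$, and identify the resulting expectations with $F_{ij}$ and $G_{ji}$. The only difference is a cosmetic choice of indexing (the paper labels so that no final relabeling is needed), so there is nothing to add.
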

\begin{proof}
Using the independence of $X$ and $Y$,
\begin{align*}
\E{X_{f(Y)}\cdot Y_{g(X)}}
  &= \sum_{i,j=1}^{n}\E{X_j \cdot Y_i\cdot \ind{g(X)=i} \cdot \ind{f(Y)=j}}\\
  &= \sum_{i,j=1}^{n}\E{X_j \cdot \ind{g(X)=i}} \cdot \E{Y_i \cdot
    \ind{f(Y)=j}}\\
  &\stackrel{\eqref{zero0}}{=} \sum_{i,j=1}^{n} F_{ij}G_{ji}.
\end{align*}
\end{proof}

In the next lemma we show that $W \leq 1/2$ (corresponding to $U \leq 3/8$, as in \cite{buhler22}), and that this bound can be furthermore improved if we can restrict the probability that $f$ (or $g$) is large.
\begin{lemma}\label{lemma7}
Let $k>0$ and let $0<\epsilon<1/4$.  Assume that 
\begin{equation}\label{three1}
\Pr{g(X)\geq k}\leq\epsilon.
\end{equation}
Then
$$\E{X_{f(Y)}Y_{g(X)}}\leq\frac{1}{2}-(1-4\epsilon)2^{-k+1}.$$
\end{lemma}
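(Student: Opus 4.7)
The natural starting point is the identity $W = \sum_{i,j} F_{ij} G_{ji}$ from Lemma~\ref{olem}. Writing $\beta_i \colonequals \Pr{g(X) = i}$, I would split the sum according to whether $i < k$ or $i \geq k$, so that $W = A + B$ with $A = \sum_{i < k,\, j} F_{ij} G_{ji}$ and $B = \sum_{i \geq k,\, j} F_{ij} G_{ji}$.

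The tail sum $B$ is easy to control. Since $|G_{ji}| = |\E{X_j \ind{g(X) = i}}| \leq \beta_i$ and $\sum_j |F_{ij}| \leq \E{|Y_i|} = 1$, we obtain $|B| \leq \sum_{i \geq k} \beta_i \leq \epsilon$.

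For the main term $A$, I would attempt an induction on $k$. The base case $k=2$ can be verified directly: the hypothesis forces $\Pr{g(X) = 1} \geq 1 - \epsilon$, so $G_{j1} = -\E{X_j \ind{g(X) \neq 1}}$ satisfies $|G_{j1}| \leq \epsilon$ for every $j$; combined with $\sum_j |F_{1j}| \leq 1$, this yields $|A| \leq \epsilon$ and hence $W \leq 2\epsilon$, which matches the claim when $k=2$. For the inductive step from $k-1$ to $k$, I would consider a modified strategy $g'$ obtained by remapping the states $\{g(x) \geq k\}$ to a suitable index $j^* \in [k-1]$, apply the inductive bound to $g'$, and track the perturbation $|W(f, g) - W(f, g')|$.

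The hard part will be making the induction actually close. A naive $L^{\infty}$ perturbation bound yields only $O(\epsilon)$ slack in $W(f,g) - W(f, g')$, whereas the claimed estimate demands that the improvement propagate at the rate $(1 - 4\epsilon)\, 2^{-k+1}$, which decays exponentially in $k$. Closing this gap likely requires an adaptive choice of $j^*$ (for example, after relabeling so that the heaviest atom of $g$ lies outside the merged index) combined with a sharper perturbation estimate that exploits the partition constraint $\sum_i G_{ji} = 0$ together with Lemma~\ref{olem2}. As a fallback, a direct approach that sidesteps induction---applying Cauchy-Schwarz to the restricted Fourier submatrices $(F_{ij})_{i<k,\,j}$ and $(G_{ji})_{i<k,\,j}$, while using Lemma~\ref{olem2} together with the hypothesis $\Pr{g(X) \geq k} \leq \epsilon$ to sharpen the bound on $\sum_{i<k,\,j} G_{ji}^2$---may produce the improvement more cleanly.
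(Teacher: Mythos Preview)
Your base case $k=2$ is correct, but neither of the two routes you sketch for general $k$ can produce the $2^{-k+1}$ improvement, and the paper's argument is completely different.

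For the induction: passing from $k-1$ to $k$, the natural move is to relabel so that $\beta_{k-1}=\Pr{g(X)=k-1}$ is the smallest among $\beta_1,\ldots,\beta_{k-1}$, and then apply the inductive hypothesis with $\epsilon'=\beta_{k-1}+\epsilon$. That yields $W\leq \tfrac12-(1-4\epsilon')2^{-k+2}$, and for this to imply the desired bound you need $\beta_{k-1}\leq (1-4\epsilon)/8$. Relabeling only guarantees $\beta_{k-1}\leq 1/(k-1)$, so already at $k=3$ the step fails (you would need $\beta_2\leq 1/8$ but can only ensure $\beta_2\leq 1/2$). No adaptive choice of merge index $j^*$ repairs this, because merging changes $\sum_j G_{ji}$ for only one value of~$i$ and the partition constraint $\sum_i G_{ji}=0$ gives no leverage on a single column.

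The Cauchy--Schwarz fallback fares no better. The restriction $i<k$ costs nothing on the $F$ side: a strategy $f$ may place all its level-one Fourier mass on coordinates $1,\ldots,k-1$, so $\sum_{i<k,j}F_{ij}^2$ can equal $\sum_{i,j}F_{ij}^2\leq 1/2$. On the $G$ side, Lemma~\ref{olem2} gives $\sum_{i<k,j}G_{ji}^2\leq \tfrac12\sum_{i<k}\beta_i\leq\tfrac12$. Thus $A\leq 1/2$, and together with $|B|\leq\epsilon$ you do not even recover $W\leq 1/2$.

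The paper does not use the Fourier identity here at all. It first symmetrizes via $(X,Y)\stackrel{d}{=}(\pm X,\pm Y)$ to get
\[
4W_{f,g}=\E{(X_{f(Y)}-X_{f(-Y)})(Y_{g(X)}-Y_{g(-X)})},
\]
and then conditions on the event $B_Y=\{Y_1=\cdots=Y_{k-1}\}$, which has probability $2^{-(k-2)}$. On $B_Y$, whenever $g(X)<k$ and $g(-X)<k$ one has $Y_{g(X)}=Y_{g(-X)}$ and the integrand vanishes; the exceptional set has probability at most $2\epsilon$ by the hypothesis and a union bound, so this piece contributes at most $8\epsilon\cdot 2^{-(k-2)}$. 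On $B_Y^c$, the elementary bound $\Pr{X_{f(Y)}=X_{f(-Y)}\mid Y}\geq 1/2$ gives a contribution of at most $2\Pr{B_Y^c}$. The factor $2^{-k+1}$ is therefore the probability of a specific configuration of $k-1$ coordinates of $Y$, a combinatorial source that your Fourier decomposition does not see.
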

\begin{proof}
Recall our notation $W_{f,g} = \E{X_{f(Y)}Y_{g(X)}}$.  Since $X$ is independent of $Y$, since $X$ and $-X$ have the same distribution, and since $Y$ and $-Y$ have the same distribution, $(X,Y)$ has the same joint distribution as $(X,-Y)$, $(-X,Y)$ or $(-X,-Y)$. Hence
\begin{align*}
  W_{f,g} = \E{-X_{f(-Y)}Y_{g(X)}} = \E{-X_{f(Y)}Y_{g(-X)}}  = \E{X_{f(-Y)}Y_{g(-X)}}.
\end{align*}
Summing these and rearranging yields
\begin{align*}
  4W_{f,g} = \E{(X_{f(Y)}-X_{f(-Y)})(Y_{g(X)}-Y_{g(-X)})}.
\end{align*}

Let $B_Y$ be the event that $Y_1 = Y_2 = \cdots = Y_{k-1}$, and note that $\Pr{B_Y} = 2^{-(k-2)}$. Write
\begin{align}
  \label{eq:Bk-bound}
  4W_{f,g} =
  &\E{(X_{f(Y)}-X_{f(-Y)})(Y_{g(X)}-Y_{g(-X)})\ind{B_Y}}\nonumber\\
  &+ \E{(X_{f(Y)}-X_{f(-Y)})(Y_{g(X)}-Y_{g(-X)})\ind{B_Y^c}}.
\end{align}
Consider the first term on the right hand side. Since $\Pr{g(-X) \geq k} = \Pr{g(X) \geq k} \leq \epsilon$, we have that $\Pr{g(X) \geq k \text{ or } g(-X) \geq k} \leq 2\epsilon$, by the union bound. Now, if $g(X) < k$ and $g(-X) < k$ then, under the event $B_Y$,  $Y_{g(X)} = Y_{g(-X)}$. Hence
\begin{align*}
  (Y_{g(X)}-Y_{g(-X)}) \cdot \ind{B_Y} = (Y_{g(X)}-Y_{g(-X)})\cdot\ind{g(X) \geq k \text{ or } g(-X)\geq k}\cdot\ind{B_Y}.
\end{align*}
We thus have by independence of $X$ and $Y$ that
\begin{align*}
  \lefteqn{\E{(X_{f(Y)}-X_{f(-Y)})(Y_{g(X)}-Y_{g(-X)})\ind{B_Y}}}\\
  &= \E{(X_{f(Y)}-X_{f(-Y)})\cdot(Y_{g(X)}-Y_{g(-X)})\cdot\ind{g(X) \geq k \text{ or } g(-X)\geq k}\cdot\ind{B_Y}}\\
  &\leq \E{4\cdot\ind{g(X) \geq k \text{ or } g(-X)\geq k}\cdot\ind{B_Y}}\\
  &= 4\Pr{g(X) \geq k \text{ or } g(-X)\geq k,B_Y}\\
  &=4\Pr{g(X) \geq k \text{ or } g(-X)\geq k}\Pr{B_Y}
  \\
  &\leq 8\epsilon 2^{-(k-2)}.
\end{align*}

Consider now the second term on the right hand side of \eqref{eq:Bk-bound}. Noting that $\Pr{X_{f(Y)}=X_{f(-Y)}}{Y=y} \geq 1/2$ for any $y$, we get that
\begin{align*}
  \E{(X_{f(Y)}-X_{f(-Y)})(Y_{g(X)}-Y_{g(-X)})\ind{B_Y^c}} \leq 2\Pr{B_Y^c} = 2(1-2^{-(k-2)}).
\end{align*}

Substituting these two bounds back into \eqref{eq:Bk-bound} and diving by $4$, we get
\begin{align*}
  W_{f,g} \leq \frac{1}{4}\left(8\epsilon 2^{-(k-2)} + 2(1 - 2^{-(k-2))}\right) = \frac{1}{2}-(1-4\epsilon)2^{-k+1}.
\end{align*}
\end{proof}

Finally, let us apply the upper bound \eqref{eq:chang3} to the case not covered by Lemma \ref{lemma7}.

\begin{lemma}\label{lemma6}
Let $\Omega_{f}\subseteq[n]$ be 8 indices (or $n$ indices if $n<8$) $i\in[n]$ with the largest values of $\Pr{f(X)=i}$.  Similarly define $\Omega_{g}\subseteq[n]$.  Assume that
$$\Pr{f(X)\notin\Omega_{f}}\geq .009079 \qquad\mathrm{and}\qquad \Pr{g(X)\notin\Omega_{g}}\geq .009079.$$
Then
$$\E{X_{g(Y)}Y_{f(X)}}<.4962357.$$
\end{lemma}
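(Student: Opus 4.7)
The plan is to apply Cauchy--Schwarz to the representation in Lemma~\ref{olem} and bound each factor via the combined Chang estimate \eqref{eq:chang3}. Writing $\alpha_j^f = \Pr{f(X)=j}$ and recognising $F_{ij}$ as the level-one Fourier coefficient of $\ind{f=j}$, Chang's inequality applied level-set by level-set gives
\[
\sum_{i,j} F_{ij}^2 \;\leq\; \sum_j \phi(\alpha_j^f),\qquad \phi(\alpha) \colonequals \min\bigl\{\,2\alpha^2 \log(1/\alpha),\,\alpha/2\,\bigr\},
\]
and analogously for $G$ and $g$. Cauchy--Schwarz on \eqref{eq:FG} then reduces the problem to showing $\sum_j \phi(\alpha_j) \leq 0.4962357$ for every probability distribution $\alpha$ on $[n]$ carrying at least $0.009079$ of its mass outside its top eight indices.

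To attack this, I would work with the deficit $g(\alpha) \colonequals \alpha/2 - \phi(\alpha) \geq 0$, which vanishes exactly on the ``linear'' band $[\alpha_1^\star,\alpha_2^\star]$ where $4\alpha\log(1/\alpha) \geq 1$ (the smaller root being $\alpha_1^\star \approx 0.1165$). Then $\sum_j \phi(\alpha_j) = \tfrac12 - \sum_j g(\alpha_j)$, and the goal becomes $\sum_j g(\alpha_j) \geq 0.0037643$. The key structural input is that every non-top-eight bin satisfies $\alpha_j \leq 1/9$: each of the eight top masses dominates $\alpha_j$, forcing $9\alpha_j \leq 1$. Since $1/9 < \alpha_1^\star$, every non-top-eight bin sits in the Chang regime and contributes strictly positively to $\sum_j g(\alpha_j)$.

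Let $S = \sum_{j \notin \Omega} \alpha_j \geq 0.009079$. The function $g$ is concave on $[0,\alpha_1^\star]$ with $g(0)=0$, hence subadditive there, so iterating yields $\sum_{j \notin \Omega} g(\alpha_j) \geq g(S)$ whenever $S \leq \alpha_1^\star$. I would verify by direct differentiation that $g$ is decreasing at $S = 0.009079$ and that $g(0.009079) > 0.0037643$, matching the target to within the claimed number of digits and handling a neighbourhood of the extremal configuration (top eight equal to $0.990921/8$, one non-top-eight bin at $0.009079$).

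The main obstacle will be the range $S > 0.068$, where the eight top masses cannot all fit inside the linear band (since $S_{\text{top}} < 8\alpha_1^\star$) and $g(S)$ alone no longer suffices---indeed $g(S) = 0$ once $S \in [\alpha_1^\star,\alpha_2^\star]$. In this regime I would incorporate the forced Chang penalty from the top-eight bins below $\alpha_1^\star$ (at least $g(\alpha_{(8)}) > 0$, with $\alpha_{(8)} \leq (1-S)/8$) and bound the non-top-eight contribution to $\sum_j \phi(\alpha_j)$ using the tangent-line estimate $\phi(\alpha) \leq (\alpha/\alpha_{(8)})\phi(\alpha_{(8)})$, valid by the convexity of $2\alpha^2\log(1/\alpha)$ on $[0,e^{-3/2}]$. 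A two-variable analysis in $(S,\alpha_{(8)})$ should then close the argument by showing the resulting upper bound on $\sum_j \phi(\alpha_j)$ stays strictly below $0.4962357$ throughout this range.
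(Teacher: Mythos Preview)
Your overall strategy---Cauchy--Schwarz on the bilinear form from Lemma~\ref{olem}, then the combined Chang bound \eqref{eq:chang3} level-set by level-set, reducing to an optimisation over probability vectors $(\alpha_j)$---is exactly the paper's route. The divergence is in how the optimisation is carried out, and there your argument has a genuine gap.

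First, a concrete error: $g(\alpha)=\alpha/2-2\alpha^2\log(1/\alpha)$ is \emph{increasing}, not decreasing, at $\alpha=0.009079$; one computes $g'(\alpha)=\tfrac12+4\alpha\log\alpha+2\alpha$ and $g'(0.009079)\approx 0.35>0$. More importantly, even with the sign corrected, your subadditivity bound $\sum_{j\notin\Omega}g(\alpha_j)\ge g(S)$ does not by itself control the whole range $S\in[0.009079,\alpha_1^\star]$: since $g$ is concave on this interval with $g(\alpha_1^\star)=0$, the lower bound $g(S)$ first rises above $g(0.009079)$ and then falls back through it at $S\approx 0.100$, vanishing at $S=\alpha_1^\star\approx 0.116$. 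So your ``neighbourhood of the extremal configuration'' covers much more than you suggest (up to $S\approx 0.100$, not just near $0.009$), but your stated threshold $S>0.068$ is the wrong place to switch arguments, and the two-variable sketch for large $S$ is left entirely heuristic. You would need to (i) show $g(S)\ge g(0.009079)$ for $S\in[0.009079,S_0]$ with $S_0$ chosen so that for $S>S_0$ the top-eight cannot all lie in $[\alpha_1^\star,\alpha_2^\star]$, and (ii) actually carry out the promised analysis for $S>S_0$---neither step is done.

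The paper sidesteps all of this by working directly with $\phi$ rather than its deficit $g$. Since $\phi(\alpha)=2\alpha^2\log(1/\alpha)$ is \emph{convex} on $[0,1/9]$ (indeed on $[0,e^{-3/2}]$), the maximum of $\sum_{j\ge 9}\phi(\alpha_j)$ under $\alpha_j\le 1/9$ and fixed tail mass is achieved by concentrating everything into a single $\alpha_9$; the top eight may be taken in the linear regime, contributing $(1-\alpha_9)/2$. This collapses the problem to the one-variable function $\alpha_9\mapsto (1-\alpha_9)/2+2\alpha_9^2\log(1/\alpha_9)$ on $[0.009079,\,1-8\alpha']$, which is convex there, so its maximum sits at an endpoint and equals the claimed $\approx 0.496236$ at $\alpha_9=0.009079$. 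That convexity-of-$\phi$ reduction is the missing idea that would replace your incomplete case analysis.
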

\begin{proof}
For any $j \in [n]$, define $\alpha_{j}\colonequals\Pr{f(X)=j}$, and recall that $F_{ij} \colonequals \widehat{\ind{f=j}}(i)$. We prove the claim by bounding  the quantity \begin{equation}\label{mmeq}
\|F\| \colonequals \sqrt{\sum_{j=1}^{n}\sum_{i=1}^{n}\absf{F_{ij}}^{2}}.
\end{equation}
Indeed, by \eqref{eq:FG} and the Cauchy-Schwarz inequality we have that 
$$
\E{X_{g(Y)}Y_{f(X)}} \leq \|F\|\cdot\|G\|,
$$
and so it suffices to show that $\|F\|^2,\|G\|^2 \leq 0.4962357$.

Without loss of generality, we can apply a permutation to $[n]$ so that $\Pr{f(X)=1}\geq \Pr{f(X)=2} \geq \cdots \geq \Pr{f(X)=n}$, and the assumption on $f$ becomes
$$\Pr{f(X)>8}\geq.009079.$$
Since we have already applied the Cauchy-Schwarz inequality, we can apply another permutation to $[n]$ to similarly get this conclusion for the function $g$.  Below we will only bound $\|F\|$, the case of $g$ being identical.

Denote $\alpha'\approx .116101$ as the unique $0<\alpha<1/2$ such that $2\alpha^{2}\log(1/\alpha)=\alpha/2$.  Since $(F_{ij})_{i=1}^{n}$ are the first level Fourier coefficients of $\ind{f=j}$,  we can use \eqref{eq:chang3} to upper bound $\|F\|^2$ by 
\begin{equation}\label{meq}
\sum_{j=1}^{n}\Big(2\alpha_{j}^{2}\log(1/\alpha_{j})
1_{\{\alpha_{j}<\alpha'\}} +\quad \frac{\alpha_{j}}{2}1_{\{\alpha_{j}\geq\alpha'\}}\Big).
\end{equation}
Subsequently, and given the lemma hypotheses, we can bound $\|F\|^2$ by the maximum of \eqref{meq} subject to the constraints: \begin{equation}\label{meqcon}
\sum_{j=9}^{n}\alpha_{j}\geq.009
,\quad \alpha_{1}\geq\alpha_{2}\geq\cdots\geq\alpha_{n}
,\quad\sum_{j=1}^{n}\alpha_{j}=1.
\end{equation}
These constraints imply $\alpha_{9}\leq 1/9$.  Note also that $1/9<\alpha'$.  The function $z(\alpha)= 2\alpha^{2}\log(1/\alpha)
1_{\{\alpha<\alpha'\}} +(\alpha/2)1_{\{\alpha\geq\alpha'\}}$ is convex for $0<\alpha<\alpha'$ and linear for $\alpha'\leq\alpha<1$.  So, if $\alpha_{1},\alpha_{2},\ldots$ is a maximum of \eqref{meq} subject to the constraints \eqref{meqcon}, we may assume that $\alpha_{1},\ldots,\alpha_{8}\geq\alpha'$, $\alpha_{j}=0$ for all $j>9$, and therefore $\alpha_{9}\leq 1-8\alpha'\leq.0712$, since $z(\alpha)$ is strictly convex for $0<\alpha<1/9$, and all $\alpha_{j}$ with $j\geq9$ are in this region.  The function $\alpha\mapsto (1-\alpha)/2 + 2\alpha^{2}\log(1/\alpha)$ is convex for $0<\alpha<.0712$, so \eqref{meq} is at most $4\alpha'+2(1-8\alpha')^{2}\log(1/(1-8\alpha'))\leq.4912$, or
\begin{flalign*}
&2\alpha_{9}^{2}\log(1/\alpha_{9})
+\sum_{j=1}^{8}\frac{\alpha_{j}}{2}\\
&\qquad=2(.009079)^{2}\log(1/.009079) + (1-.009079)/2
\approx .49626356.
\end{flalign*}
%
%

\end{proof}

Combining with Lemma \ref{lemma6} then gives an unconditional upper bound which is worse than that of \cite{buhler22}, albeit without computer assistance.

\begin{proof}[Proof of Theorem \ref{thm:main}]

Let $f,g\colon\{-1,1\}^{n}\to[n]$.  Let $\Omega_{f}\subseteq[n]$ be 8 indices (or $n$ indices if $n<8$) $i\in[n]$ with the largest values of $\Pr{f(X)=i}$.  Similarly define $\Omega_{g}\subseteq[n]$.

Suppose for now that $\Pr{g(X)\notin\Omega_{g}}<.009079$.  After applying a permutation to $[n]$, we may assume that $\Pr{g(X)>8}<.009079$.  Then we can apply Lemma \ref{lemma7} with $k=9$ and $\epsilon=.009079$ to get
$$\E{X_{g(Y)}Y_{f(X)}}\leq\frac{1}{2}-(1-4(.009079))2^{-8}\approx 0.4962356.$$
As discussed after the statement of Theorem \ref{thm:main}, this proves Theorem \ref{thm:main}, since $(1+.4962357)/4\leq .37406$.  We get the same conclusion from Lemma \ref{lemma7} if $\Pr{f(X)\notin\Omega_{f}}<.009079$.  The only remaining case to consider is:
$$\Pr{f(X)\notin\Omega_{f}}\geq .009079\quad\text{and}\quad\Pr{g(X)\notin\Omega_{g}}\geq .009079.$$
In this case, Lemma \ref{lemma6} concludes the proof.
\end{proof}

\section{A Small Improvement}\label{secsmall}.

\begin{figure}[ht!]
\centering
\def\svgwidth{.7\textwidth}
\begingroup%
  \makeatletter%
  \providecommand\color[2][]{%
    \errmessage{(Inkscape) Color is used for the text in Inkscape, but the package 'color.sty' is not loaded}%
    \renewcommand\color[2][]{}%
  }%
  \providecommand\transparent[1]{%
    \errmessage{(Inkscape) Transparency is used (non-zero) for the text in Inkscape, but the package 'transparent.sty' is not loaded}%
    \renewcommand\transparent[1]{}%
  }%
  \providecommand\rotatebox[2]{#2}%
  \newcommand*\fsize{\dimexpr\f@size pt\relax}%
  \newcommand*\lineheight[1]{\fontsize{\fsize}{#1\fsize}\selectfont}%
  \ifx\svgwidth\undefined%
    \setlength{\unitlength}{364.40270612bp}%
    \ifx\svgscale\undefined%
      \relax%
    \else%
      \setlength{\unitlength}{\unitlength * \real{\svgscale}}%
    \fi%
  \else%
    \setlength{\unitlength}{\svgwidth}%
  \fi%
  \global\let\svgwidth\undefined%
  \global\let\svgscale\undefined%
  \makeatother%
  \begin{picture}(1,0.5377573)%
    \lineheight{1}%
    \setlength\tabcolsep{0pt}%
    \put(0,0){\includegraphics[width=\unitlength,page=1]{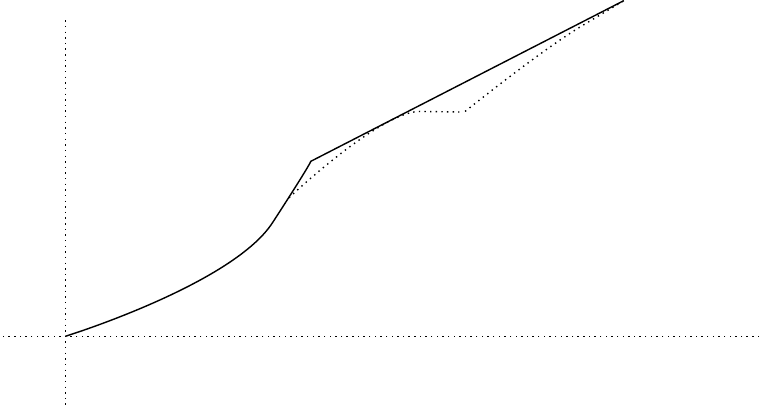}}%
    \put(0.48898676,0.06053364){\color[rgb]{0,0,0}\makebox(0,0)[lt]{\lineheight{1.25}\smash{\begin{tabular}[t]{l}$1/4$\end{tabular}}}}%
    \put(0.38096661,0.05922961){\color[rgb]{0,0,0}\makebox(0,0)[lt]{\lineheight{1.25}\smash{\begin{tabular}[t]{l}$.1161$\end{tabular}}}}%
    \put(0.78294908,0.05812823){\color[rgb]{0,0,0}\makebox(0,0)[lt]{\lineheight{1.25}\smash{\begin{tabular}[t]{l}$1/2$\end{tabular}}}}%
    \put(0.15638764,0.2158589){\color[rgb]{0,0,0}\makebox(0,0)[lt]{\lineheight{1.25}\smash{\begin{tabular}[t]{l}$2x^{2}\log(x)$\end{tabular}}}}%
    \put(0.53411255,0.44761213){\color[rgb]{0,0,0}\makebox(0,0)[lt]{\lineheight{1.25}\smash{\begin{tabular}[t]{l}$x/2$\end{tabular}}}}%
    \put(0,0){\includegraphics[width=\unitlength,page=2]{changimprove.pdf}}%
    \put(0.95203137,0.10457081){\color[rgb]{0,0,0}\makebox(0,0)[lt]{\lineheight{1.25}\smash{\begin{tabular}[t]{l}$x$\end{tabular}}}}%
  \end{picture}%
\endgroup%

\caption{The solid line shows the upper bound on squared first order Fourier coefficients from \eqref{eq:chang3}.  The dotted line shows the better (smaller) upper bound from Lemma \ref{newlemma}.  No improvement to \eqref{eq:chang3} is possible when $x=\Pr{A}\in\{1/4,1/2\}$.  
}.
\label{figone}
\end{figure}

The proof of Lemma \ref{olem2} gives a stronger inequality than the statement of Lemma \ref{olem2}, namely: if $h\colon\{-1,1\}^{n}\to\{0,1\}$, and if $\alpha\colonequals\Pr{h(X)=1}$, then
\begin{equation}\label{bonus}
\sum_{k=1}^{n}|\widehat{h}(k)|^{2}
\leq\frac{\alpha}{2}-\sum_{|S|\geq3\text{ odd}}|\widehat{h}(S)|^{2}.
\end{equation}
So, lower bounding the last quantity leads to an improved bound in Lemma \ref{olem2}, which then leads to better constants in Chang's inequality and the proof of the Main Theorem \ref{thm:main}.  This lower bound is stated in Lemma \ref{newlemma} below.

There are exactly two cases where the last term in \eqref{bonus} is zero, namely when $h=1$ on a subcube of measure $1/2$ or $1/4$.  This follows since this term is zero when $(h+h(-\cdot))^{3}=h+h(-\cdot)$, which implies that $h+h(-\cdot)$ has at most two nonzero terms in its first order coefficients.  The restriction that $h\in\{0,1\}$ can then be used to show that these nonzero coefficients have value either $1$ or $1/2$.  One can ``add epsilons'' to this argument to obtain a worse version of Lemma \ref{newlemma}.  We instead prove Lemma \ref{newlemma} using other elementary methods.

\newcommand{\vnorm}[1]{\left\|#1\right\|}    

In this section we denote $\vnorm{f}\colonequals(\E{f}^{2})^{1/2}$ for any $f\colon\{-1,1\}^{n}\to\R$.

\begin{lemma}\label{newlemma}
Let $A\subset\{-1,1\}^{n}$ with $A\cap (-A)=\emptyset$. 
If $$\sum_{S\subseteq[n]\colon \abs{S}\geq3\,\mathrm{odd}}|\widehat{\ind{A}}(S)|^{2}< .000422,$$
then
\begin{flalign*}
\sum_{S\subseteq[n]\colon \abs{S}\geq3\,\mathrm{odd}}|&\widehat{\ind{A}}(S)|^{2}
\geq\frac{1}{4}\min\Big(
(2\Pr{A})^{1/.69}38^{-1/.69}
,\\
&\qquad
\Big[\frac{\sqrt{4+4(34)(1/2 - 2\Pr{A})} - 2}{68}\Big]^{2/.69}\ind{\Pr{A}<1/4}
\\
&\qquad\quad+\Big[\frac{\sqrt{4-4(40)(1/2 - 2\Pr{A})} - 2}{80}\Big]^{2/.69}\ind{\Pr{A}\geq1/4}
,\\
&\qquad
\Big[\frac{\sqrt{16+4(32)(1 - 2\Pr{A})} - 4}{64}\Big]^{2/.69}1_{\Pr{A}<1/2}
\Big)
\end{flalign*}
\end{lemma}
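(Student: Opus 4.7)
The plan is to convert the desired lower bound on $M \colonequals \sum_{|S|\geq 3,\,|S|\text{ odd}} |\widehat{\ind{A}}(S)|^{2}$ into three separate upper bounds on $L \colonequals \sum_{k=1}^{n}|\widehat{\ind{A}}(k)|^{2}$, which will correspond to the three expressions inside the $\min$ of the claim. The key first observation is that the disjointness $A \cap (-A) = \emptyset$ implies $\ind{A}(x)\ind{A}(-x) \equiv 0$, hence $\langle \ind{A}, \ind{A}(-\cdot)\rangle = 0$. Retracing the proof of Lemma~\ref{olem2} with this equality promotes \eqref{bonus} to the exact identity $L + M = \alpha/2$, where $\alpha = \Pr{A}$. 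Thus every upper bound $L \leq U(\alpha)$ yields $M \geq \alpha/2 - U(\alpha)$.

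The main tool is the antisymmetric function $\chi(x) \colonequals \ind{A}(x) - \ind{A}(-x)$. By disjointness it takes values in $\{-1,0,1\}$; its Fourier coefficients satisfy $\widehat\chi(S) = 2\widehat{\ind{A}}(S)\ind{|S|\,\text{odd}}$; and $\chi^{2}=\chi^{4} = \ind{A \cup (-A)}$, so $\vnorm{\chi}^{2} = 2\alpha$ and $\E{\chi^{4}} = 2\alpha$ as well. Decomposing $\chi = \chi_{1} + \chi_{\geq 3}$ into its level-$1$ and higher-odd-level parts gives $\vnorm{\chi_{1}}^{2} = 4L$ and $\vnorm{\chi_{\geq 3}}^{2} = 4M$. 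The sharp cases where $M=0$ are exactly $\chi = \pm x_i$ (giving $\alpha=1/2$) and $\chi = \pm(x_i+x_j)/2$ (giving $\alpha=1/4$), which is what forces the thresholds $1/4,1/2$ and the indicator factors into the stated bound.

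From here I would derive three upper bounds on $L$. The first uses hypercontractivity: Bonami's inequality applied to the degree-one polynomial $\chi_{1}$ gives $\E{\chi_{1}^{4}} \leq 3\vnorm{\chi_{1}}^{4} = 48 L^{2}$. Combining this with $\E{\chi^{4}} = 2\alpha$, expanding $\E{(\chi_{1}+\chi_{\geq 3})^{4}}$, and controlling the mixed terms via the pointwise estimate $|\chi_{\geq 3}| \leq 1+|\chi_{1}|$ together with a Bonami $L^{p}$-bound $\E{|\chi_{1}|^{p}}^{1/p} \leq (p-1)^{1/2}\vnorm{\chi_{1}}$ for a suitable $p$, produces an inequality that rearranges to the first term $\tfrac{1}{4}(2\Pr{A})^{1/.69}\cdot 38^{-1/.69}$; the fractional exponent $1/.69$ reflects the interpolation exponent arising here. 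The remaining two bounds exploit the identity $\chi^{3}=\chi$: writing this as $\chi_{1}^{3}+3\chi_{1}^{2}\chi_{\geq 3}+3\chi_{1}\chi_{\geq 3}^{2}+\chi_{\geq 3}^{3}=\chi_{1}+\chi_{\geq 3}$, taking inner products against monomials that vanish at the values $\chi$ attains at the two extremal configurations, and applying Cauchy--Schwarz on the error terms, yields quadratic inequalities in $(4M)^{.69/2}$ with coefficient triples $(17,1,\tfrac{1}{4}-\alpha)$, $(20,1,\alpha-\tfrac{1}{4})$, and $(32,4,1-2\alpha)$; the quadratic formula applied to each yields the radical expressions in the statement, with the indicator factors ensuring the relevant quadratic has a positive real root.

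The main obstacle I anticipate is step one: since $\chi_{\geq 3}$ has no a priori degree bound, hypercontractivity does not directly estimate $\E{|\chi_{\geq 3}|^{4}}^{1/4}$ by $\vnorm{\chi_{\geq 3}}$. The workaround is the pointwise bound $|\chi_{\geq 3}| \leq 1+|\chi_{1}|$ combined with a Bonami $L^{p}$-norm estimate on $\chi_{1}$, with $p$ optimized: this is precisely where the suboptimal exponent $.69$ (a rational approximation below $\log 2$) enters. The hypothesis $M < .000422$ in the statement should play the role of ensuring that the quadratic bounds give solutions in the regime where the chosen interpolation is valid. The explicit constants $17,20,32,38$ in the final expressions will require careful bookkeeping through the two-variable expansions of $\E{(\chi_1+\chi_{\geq 3})^k}$ for $k=2,3,4$.
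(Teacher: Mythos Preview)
Your approach is genuinely different from the paper's, and the sketch does not hold together well enough to reach the stated bound with its specific constants and exponent.

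The paper does not use hypercontractivity or the identity $\chi^{3}=\chi$ at all. Instead it sets $g=\ind{A}-\ind{-A}$, lets $f=\sum_{i}a_{i}x_{i}$ be the best linear approximation (so $a_{i}=2\widehat{\ind{A}}(i)$), and puts $\epsilon=\|f-g\|$, so that $\epsilon^{2}=4M$. For a free parameter $0<c<1$, Chebyshev gives $\Pr{f\notin\{-1,0,1\}+[-\epsilon^{c},\epsilon^{c}]}\leq\epsilon^{2(1-c)}$; comparing $f$ with its coordinate-flipped versions forces each $a_{i}$ to lie in $\{0,\pm 1/2,\pm 1\}+[-\epsilon^{c},\epsilon^{c}]$. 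One then does a case analysis on how many $|a_{i}|$ are near $1$ or near $1/2$. In the ``all small'' case the Berry--Esseen CLT is invoked: $f$ is approximately Gaussian, yet must be concentrated near $\{-1,0,1\}$, which forces $\|f\|^{2}\leq 36\epsilon^{2c}$ (the $36$ comes from a cutoff at $1/6$ in the CLT estimate). The other cases---exactly two coefficients near $1/2$, one near $1$, etc.---yield the remaining branches. Pythagoras translates these into bounds on $\|g\|^{2}=2\Pr{A}$ in terms of $\epsilon$; solving the resulting quadratics in $\epsilon^{c}$ produces exactly the radicals with coefficients $34,40,32,38$. The exponent $c=.69$ is then chosen to balance the two side constraints $\epsilon<.089656^{1/[2(1-c)]}$ and $\epsilon^{c}<1/14.6$, giving the threshold $.000422$ in the hypothesis.

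Against this, your proposal has two concrete problems. First, your explanation of the exponent $.69$ is off: it is not an interpolation exponent nor an approximation to $\log 2$, but a free Chebyshev parameter optimized at the end; a Bonami/fourth-moment argument has no mechanism to produce a quadratic inequality in $(4M)^{c/2}$ for a non-integer $c$. Second, the constants $34,38,40,32$ arise from the $36\epsilon^{2c}$ Berry--Esseen bound combined with $\pm\epsilon^{2}$ and $\pm 2\epsilon^{2c}$ corrections; your hypercontractive bookkeeping on $\E{(\chi_{1}+\chi_{\geq 3})^{k}}$ would give different constants. Indeed the paper itself remarks, just before the lemma, that the algebraic route through $(h+h(-\cdot))^{3}=h+h(-\cdot)$ with ``epsilons added'' yields only a \emph{worse} version of the inequality, and that they deliberately switched to the Chebyshev/Berry--Esseen method to obtain the form stated.
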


%
\begin{proof}
Let $A\subset\{-1,1\}^{n}$ such that $A\cap(-A)=\emptyset$ and let $g\colonequals (\ind{A} - \ind{-A})$ so that $g\colon\{-1,1\}^{n}\to\{-1,0,1\}$.  Let $f\colon\{-1,1\}^{n}\to\R$ be a linear function that minimizes
$$\vnorm{g-\ell}^{2}$$
over all linear functions $\ell\colon\{-1,1\}^{n}\to\R$.  That is, $f$ is the closest possible linear approximation to $g$.  Write
\begin{equation}\label{fdef}
f(x)=\sum_{i=1}^{n}a_{i}x_{i},\qquad\forall\, x\in\{-1,1\}^{n}.
\end{equation}
for some $a_{1},\ldots,a_{n}\in\R$.  
Define
\begin{equation}\label{epsdef}
\epsilon\colonequals\vnorm{f-g}.
\end{equation}

For any $1\leq i\leq n$, define
$$
f_{i}(x)\colonequals 
f(x_{1},\ldots,x_{i-1},-x_{i},x_{i+1},\ldots,x_{n}),
\qquad\forall\, x\in\{-1,1\}^{n}.
$$
Fix $0<c<1$ to be chosen later.  Since $g\in\{-1,0,1\}$,
\begin{equation}\label{ten2}
\begin{aligned}
&\Pr{f\notin(\{-1,0,1\}+[-\epsilon^{c},\epsilon^{c}])}\\
&\qquad\leq 
\Pr{\abs{f-g}>\epsilon^{c}}\leq \vnorm{f-g}^{2}/\epsilon^{2c}\stackrel{\eqref{epsdef}}{=}\epsilon^{2(1-c)}.
\end{aligned}
\end{equation}

Similarly, $\forall$ $1\leq i\leq n$, $g_{i}(x)\colonequals g(x_{1},\ldots,x_{i-1},-x_{i},x_{i+1},\ldots,x_{n})$ satisfies $\vnorm{f_{i}-g_{i}}\stackrel{\eqref{epsdef}}{=}\epsilon$, and
$$
\Pr{f_{i}\notin(\{-1,0,1\}
+[-\epsilon^{c},\epsilon^{c}])}
\leq\epsilon^{2(1-c)}.
$$
Combining with \eqref{ten2}, we get
$$
\Pr{f+f_{i}\notin\{-2,-1,0,1,2\}
+[-2\epsilon^{c},2\epsilon^{c}]}
\leq2\epsilon^{2(1-c)}.
$$
By \eqref{fdef}, $f+f_{i}=2a_{i}x_{i}$, so
$$
\Pr{a_{i}x_{i}\notin\{-1,-1/2,0,1/2,1\}
+[-\epsilon^{c},\epsilon^{c}]}
\leq2\epsilon^{2(1-c)},\quad\forall\,1\leq i\leq n.
$$
That is, if $\epsilon<(1/2)^{\frac{1}{2(1-c)}}$, we have
\begin{equation}\label{aieq}
a_{i}\in
\{-1,-1/2,0,1/2,1\}
+[-\epsilon^{c},\epsilon^{c}],
\qquad\forall\,1\leq i\leq n.
\end{equation}
%
%
%

We split into cases by how many indices $i$ satisfy $|a_{i}|>\epsilon^{c}$ in \eqref{aieq}.

\textbf{Case 1}.  $\exists$ $1\leq i\leq n$ with $\abs{a_{i}}\in 1+[-\epsilon^{c},\epsilon^{c}]$.  

\textbf{Case 2}.  All $1\leq i\leq n$ satisfy
$$a_{i}\in[-\epsilon^{c},\epsilon^{c}].$$
Fix $1\leq i\leq n$.  From the Berry-Esseen CLT \cite[Equation (2)]{janisch24}, if $G$ is a mean zero Gaussian random variable with variance $1$, then
$$
\sup_{t\in\R}\Big|\Pr{\sum_{i=1}^{n}a_{i}X_{i}<t} -\Pr{G\sqrt{\sum_{i=1}^{n}a_{i}^{2}}<t} \Big|
\leq.56\frac{\sum_{i=1}^{n}|a_{i}|^{3}}{(\sum_{i=1}^{n}a_{i}^{2})^{3/2}}.
$$
In particular, choosing $t=\epsilon^{c}$ and $t=1-\epsilon^{c}$, and using $\abs{a_{i}}\leq\epsilon^{c}$ for all $1\leq i\leq n$,
$$
\Big|\Pr{\epsilon^{c}<f(X)<1-\epsilon^{c}} -\Pr{\epsilon^{c}<G\sqrt{\sum_{i=1}^{n}a_{i}^{2}}<1-\epsilon^{c}} \Big|
\leq\frac{1.12\epsilon^{c}}{\sqrt{\sum_{i=1}^{n}a_{i}^{2}}}.
$$
If $\epsilon<(1/2)^{1/[2(1-c)]}$, then \eqref{ten2} (and that $f,f(-\cdot)$ have the same distribution to get an extra $1/2$ factor on the right) imply that
\begin{equation}\label{eq:berry}
\int_{\epsilon^{c}/\sqrt{\sum_{i=1}^{n}a_{i}^{2}}}^{(1-\epsilon^{c})/\sqrt{\sum_{i=1}^{n}a_{i}^{2}}}\frac{e^{-z^{2}/2}dz}{\sqrt{2\pi}} 
\leq\epsilon^{2(1-c)}/2+\frac{1.12\epsilon^{c}}{\sqrt{\sum_{i=1}^{n}a_{i}^{2}}}.
\end{equation}
If $\epsilon^{c}/\sqrt{\sum_{i=1}^{n}a_{i}^{2}}<1/6$, then we get
$$
\int_{1/6}^{1-1/6}\frac{e^{-z^{2}/2}dz}{\sqrt{2\pi}} 
\leq\epsilon^{2(1-c)}/2+1.12/6.
$$

That is,
$$
.231488
\leq\epsilon^{2(1-c)}/2+.18666,
$$
which implies that $\epsilon^{2(1-c)}>.089656$.

In other words, if $\epsilon<.089656^{1/[2(1-c)]}$, we have shown that $\epsilon^{c}/\sqrt{\sum_{i=1}^{n}a_{i}^{2}}<1/6$ does not occur, i.e.
$$\sum_{i=1}^{n}a_{i}^{2}<36\epsilon^{2c}.$$
That is,
$$\vnorm{f}^{2}\stackrel{\eqref{fdef}}{\leq} 36\epsilon^{2c}.$$

\textbf{Case 3}.  Case 1 does not occur, and $\exists$ exactly two $1\leq i<j\leq n$ with 
$\abs{a_{i}},\abs{a_{j}}\in 
1/2+[-\epsilon^{c},\epsilon^{c}]$.  Then all other coefficients are at most $\epsilon^{c}$ in absolute value, so applying Case 2 to the function $f-(a_{i}x_{i}+a_{j}x_{j})$ gives
$$\vnorm{f-(a_{i}x_{i}+a_{j}x_{j})}^{2}<36\epsilon^{2c}.$$
Compared to Case 2, instead of using \eqref{ten2} in \eqref{eq:berry} we use
\begin{flalign*}
&\Pr{f-(a_{i}x_{i}+a_{j}x_{j})\notin(\{-2,-1,0,1,2\}+[-\epsilon^{c},\epsilon^{c}])}\\
&\qquad\leq 
\Pr{\abs{f-(a_{i}x_{i}+a_{j}x_{j})-[g-(a_{i}x_{i}+a_{j}x_{j})]}>\epsilon^{c}}\\
&\qquad\leq \vnorm{f-g}^{2}/\epsilon^{2c}\stackrel{\eqref{epsdef}}{=}\epsilon^{2(1-c)}.
\end{flalign*}

\textbf{Case 4}.  Case 1 does not occur, and $\exists$ exactly one, three or four indices $1\leq i\leq n$ with
$\abs{a_{i}}\in 
1/2+[-\epsilon^{c},\epsilon^{c}]$.  This case is similar to Case 3, but (e.g. in the case of exactly one index) we get
$$\vnorm{f-a_{i}x_{i}}^{2}<36\epsilon^{2c},$$
which then implies that, for any $t>0$,
\begin{flalign*}
&\Pr{f\notin\{-1/2,1/2\}+t[-\epsilon^{c},\epsilon^{c}]}\\
&\qquad\leq\Pr{|f-a_{i}x_{i}|>t\epsilon^{c}}\leq\|f-a_{i}x_{i}\|^{2}/[t^{2}\epsilon^{2c}]\leq36/t^{2}.
\end{flalign*}
But this violates \eqref{ten2} when $t=7$, so that $\epsilon^{2(1-c)}<13/49$ (and $(t+1)\epsilon^{c}<1$, i.e. $\epsilon^{c}<1/8$), i.e. this cannot occur for small $\epsilon$.  Similarly, with exactly three indices $a_{i},a_{j},a_{k}$ near $1/2$, we have $\ell(x)\colonequals a_{i}x_{i}+a_{j}x_{j}+a_{k}x_{k}$ and
\begin{flalign*}
&\Pr{f\notin\{-3/2, -1/2,1/2, 3/2\}+t[-3\epsilon^{c},3\epsilon^{c}]}\\
&\qquad\leq\Pr{|f-\ell(x)|>3t\epsilon^{c}}\leq\|f-\ell(x)\|^{2}/[\epsilon^{2c}9t^{2}]=4/t^{2}.
\end{flalign*}
But this violates \eqref{ten2} when $t=2.1$, so that $\epsilon^{2(1-c)}<.41/4.41$ (and $(3t+1)\epsilon^{c}<1/2$, i.e. $\epsilon^{c}<1/(2\cdot7.3)$), i.e. we cannot have exactly three indices of this form.

The remaining case of exactly four indices satisfying $\abs{a_{i}}\in 
1/2+[-\epsilon^{c},\epsilon^{c}]$ cannot occur for $\epsilon$ small, but we do not need to rule this out, since it will not improve our bounds later on. 

\textbf{Combining all cases}

From the Pythagorean Theorem in the form $\vnorm{f}^{2}=\vnorm{f-\ell}^{2}+\vnorm{\ell}^{2}$, where $\ell$ is a linear function containing some coefficients of $f$ (depending on the above case under consideration), if $\epsilon<(1/2)^{1/(2(1-c))}$, then
\begin{itemize}
\item[(1)] $\vnorm{f}^{2}\geq (1-\epsilon^{c})^{2}  = 1-2\epsilon^{c}+\epsilon^{2c}$, or
\item[(2)] $\vnorm{f}^{2}\leq \epsilon^{2}+36\epsilon^{2c}$, if $\epsilon<.089656^{1/[2(1-c)]}$, or
\item[(3)] $\vnorm{f}^{2}\geq2(1/2 - \epsilon^{c})^{2} - 36\epsilon^{2c}=1/2 - 2\epsilon^{c}+2\epsilon^{2c}-36\epsilon^{2c}$, if $\epsilon<.089656^{1/[2(1-c)]}$, or $\vnorm{f}^{2}\leq 1/2-2\epsilon^{c} + 38\epsilon^{2c}$, or
\item[(4)] 
$\vnorm{f}^{2}\geq4(1/2 - \epsilon^{c})^{2} - 36\epsilon^{2c}=1 - 4\epsilon^{c}+4\epsilon^{2c}-36\epsilon^{2c}$, if $\epsilon^{c}<1/14.6$.
\end{itemize}

Then, from the Pythagorean Theorem in the form $\vnorm{g}^{2}=\vnorm{f-g}^{2}+\vnorm{f}^{2}$, we have: if $\epsilon<.089656^{1/[2(1-c)]}$, and if $\epsilon^{c}<1/14.6$, then
\begin{itemize}
\item[(1)] $\vnorm{g}^{2}\geq (1-\epsilon^{c})^{2}= 1-2\epsilon^{c}+\epsilon^{2c}\geq 1-2\epsilon^{c}$, or
\item[(2)] $\vnorm{g}^{2}\leq 2\epsilon^{2}+36\epsilon^{2c}\leq 38\epsilon^{2c}$, or
\item[(3)] $\vnorm{g}^{2}\geq2(1/2 - \epsilon^{c})^{2} - 36\epsilon^{2c}+\epsilon^{2}\geq1/2 - 2\epsilon^{c}-34\epsilon^{2c}$, or \\ $\vnorm{g}^{2}\leq 1/2 - 2\epsilon^{c}+40\epsilon^{2c}$, or
\item[(4)] 
$\vnorm{g}^{2}\geq4(1/2 - \epsilon^{c})^{2} - 36\epsilon^{2c}+\epsilon^{2}\geq1 - 4\epsilon^{c}-32\epsilon^{2c}$.
\end{itemize}

Substituting the definition of $\epsilon$ from \eqref{epsdef}, and solving for $\epsilon^{2}$ or $\epsilon^{c}$,
\begin{itemize}
\item[(1)] $\vnorm{f-g}^{2}\geq2^{-2/c}(1-\vnorm{g}^{2})^{2/c}$, if $\vnorm{g}^{2}\leq1$.
\item[(2)] $\vnorm{f-g}^{2}\geq\vnorm{g}^{2/c}38^{-1/c}$.
\item[(3)] $\vnorm{f-g}^{c}\geq\frac{\sqrt{4+4(34)(1/2 - \vnorm{g}^{2})} - 2}{68}$, if $\vnorm{g}^{2}\leq 1/2$, or\\
$\vnorm{f-g}^{c}\geq\frac{\sqrt{4-4(40)(1/2 - \vnorm{g}^{2})} - 2}{80}$, if $\vnorm{g}^{2}\geq 1/2$, or
\item[(4)] $\vnorm{f-g}^{c}\geq\frac{\sqrt{16+4(32)(1 - \vnorm{g}^{2})} - 4}{64}$, if $\vnorm{g}^{2}\leq1$.
\end{itemize}

%
%

So, putting everything together, and noting that $\vnorm{g}^{2}=2\Pr{A}$, since $A\cap(-A)=\emptyset$, either (1) through (4) hold, or 
$$
\vnorm{f-g}^{2}\geq\min\Big(.089656^{1/(1-c)},\quad (1/14.6)^{2/c}\Big)
$$

So, choosing $c=.69$, we get, either $\vnorm{f-g}^{2}\geq .000422$, or

\begin{flalign*}
\vnorm{f-g}^{2}
\geq\min\Big(
&
2^{-2/.69}(1-2\Pr{A})^{2/.69}\ind{\Pr{A}<1/2}
,\quad 
(2\Pr{A})^{1/.69}38^{-1/.69}
,\\
&\qquad
\Big[\frac{\sqrt{4+4(34)(1/2 - 2\Pr{A})} - 2}{68}\Big]^{2/.69}\ind{\Pr{A}<1/4}
\\
&\qquad\quad+\Big[\frac{\sqrt{4-4(40)(1/2 - 2\Pr{A})} - 2}{80}\Big]^{2/.69}\ind{\Pr{A}\geq1/4}
,\\
&\qquad
\Big[\frac{\sqrt{16+4(32)(1 - 2\Pr{A})} - 4}{64}\Big]^{2/.69}\ind{\Pr{A}<1/2}
\Big)
\end{flalign*}

Since

\begin{flalign*}
\vnorm{f - g}^{2}
&=\vnorm{f-(\ind{A}-\ind{-A})}^{2}
=4\vnorm{f/2 - (\ind{A}-\ind{-A})/2}^{2}\\
&=4\sum_{\abs{S}\geq3\,\text{odd}}\absf{\widehat{\ind{A}}(S)}^{2}.
\end{flalign*}
We finally conclude the lemma, noting also that the first term in the minimum is an upper bound for the last term, i.e. we can remove the first term from the minimum.

\end{proof}

We used an assumption that $A=-A$ in the previous Lemma, which is justified by the following structural result for Chang's inequality maximizers.

\begin{lemma}\label{lemma8}
Let $A\subset\{-1,1\}^{n}$ and let $\alpha\colonequals \PP(A)$.  Let $z\colonequals \E{X 1_{A}(X)}\in\R^{n}$.  Suppose $A$ maximizes $\sum_{i=1}^{n}\absf{\widehat{1_{B}}(i)}^{2}$ subject to the constraint $\PP(B)=\alpha$ over all $B\subset\{-1,1\}^{n}$.  Then $A$ is a half space, i.e. $\exists$ $u\in\R$ such that
$$A=\{x\in\{-1,1\}^{n}\colon\langle x,z\rangle\geq u\}.$$
\end{lemma}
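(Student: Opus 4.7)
The plan is a standard one-point exchange argument in $\R^n$. First I would reduce the objective to a squared Euclidean norm: since $\widehat{1_{B}}(i)=\E{X_i 1_{B}(X)}$ is the $i$th coordinate of the vector $w(B)\colonequals\E{X\,1_{B}(X)}\in\R^n$, the quantity being maximized equals $\sum_{i=1}^n w(B)_i^2=\|w(B)\|^2$, where $\|\cdot\|$ is the Euclidean norm on $\R^n$. Thus $A$ is a maximizer of $B\mapsto\|w(B)\|^2$ over all $B\subset\{-1,1\}^n$ with $|B|=\alpha\,2^n$, and $z=w(A)$.

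Next I would perform a single point exchange. Given any $x\in A$ and $y\in\{-1,1\}^n\setminus A$, set $A'\colonequals(A\setminus\{x\})\cup\{y\}$; then $|A'|=|A|$ and
\begin{equation*}
w(A')-w(A)=\E{X(1_{A'}(X)-1_{A}(X))}=2^{-n}(y-x).
\end{equation*}
Expanding the squared norm yields
\begin{equation*}
\|w(A')\|^2-\|w(A)\|^2=2\cdot 2^{-n}\langle z,\,y-x\rangle+2^{-2n}\|y-x\|^2,
\end{equation*}
which optimality of $A$ forces to be $\leq 0$. Since $y\neq x$, the quadratic term is strictly positive, so the linear term is strictly negative, i.e., $\langle z,x\rangle>\langle z,y\rangle$.

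Finally, since this strict inequality holds for every $x\in A$ and every $y\notin A$, we have $\min_{x\in A}\langle z,x\rangle>\max_{y\notin A}\langle z,y\rangle$ (with the obvious conventions in the trivial cases $A=\emptyset$ or $A=\{-1,1\}^n$, where $z=0$ and any $u$ works). Picking any $u$ in this nonempty interval yields $A=\{x\in\{-1,1\}^n:\langle x,z\rangle\geq u\}$, the desired half-space.

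There is no serious obstacle; the only point worth flagging is that the quadratic term $2^{-2n}\|y-x\|^2$ in the expansion is what upgrades the usual first-order stationarity to a \emph{strict} separation $\langle z,x\rangle>\langle z,y\rangle$, which is precisely what guarantees the separating threshold $u$ exists. A purely first-order variational argument would only give $\geq$ instead of $>$, leaving open the possibility that $A$ is determined by $\langle z,\cdot\rangle$ only up to tie-breaking on a level set — a possibility that is ruled out here.
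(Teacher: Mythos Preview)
Your proof is correct and is essentially the same one-point exchange argument as the paper's: both compute $\|w(A')\|^2-\|w(A)\|^2 = 2\cdot 2^{-n}\langle z,y-x\rangle + 2^{-2n}\|y-x\|^2$ and use the strictly positive quadratic term to force $\langle z,x\rangle > \langle z,y\rangle$ for every $x\in A$, $y\notin A$. Your presentation is in fact a bit more direct than the paper's, which packages the same computation inside an auxiliary half-space $S_t$ and a contradiction; your version simply reads off the strict separation and chooses $u$ in the resulting gap.
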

\begin{proof}
Let $t\in\R$ and $S_{t}\colonequals\{x\in\{-1,1\}^{n}\colon\langle x,z\rangle\geq t\}$.  Let $u\colonequals\inf\{t\in\R\colon \PP(S_{t})<\PP(A)\}$.  We will first show that, for any $\epsilon>0$,
$$S_{u+\epsilon}\subset A.$$
Assume for the sake of contradiction that this does not occur.  Then there exists $x\in A,y\notin A$ and $t\in\R$ such that $\langle z,x\rangle<t$ and $\langle z,y\rangle\geq t$.  Consider the set $A'$ defined by $A'\colonequals (A\setminus\{x\})\cup\{y\}$.  Denote $z'\colonequals \E{X 1_{A'}(X)}\in\R^{n}$.  Then
\begin{flalign*}
\sum_{i=1}^{n}\absf{\widehat{1_{A'}}(i)}^{2}
-\sum_{i=1}^{n}\absf{\widehat{1_{A}}(i)}^{2}
&=\vnorm{z'}^{2}-\vnorm{z}^{2}\\
&=\vnorm{z-x2^{-n}+y2^{-n}}^{2}-\vnorm{z}^{2}\\
&=2\cdot 2^{-n}\langle z, y-x\rangle+ 2^{-2n}\vnorm{y-x}^{2}.
\end{flalign*}
By definition of $x,y,t$, we have $\langle z, y-x\rangle\geq0$.  So,
$$
\sum_{i=1}^{n}\absf{\widehat{1_{A'}}(i)}^{2}
-\sum_{i=1}^{n}\absf{\widehat{1_{A}}(i)}^{2}
>0.
$$
We have achieved a contradiction.  We conclude that $A\supset S_{u+\epsilon}$.

We now consider what happens when $\epsilon=0$.  By definition of $S_{u}$, we have $\PP(S_{u+\epsilon})< \PP(A)\leq \PP(S_{u})$.  If $\PP(S_{u})=\PP(A)$, we conclude that $A=S_{u}$, and the proof is completed.  In the remaining case that $\PP(S_{u})>\PP(A)$, the cardinality of $\partial S_{u}\colonequals\{x\in\{-1,1\}^{n}\colon\langle x,z\rangle=u\}$ is larger than one.  So, let $x,y\in\partial S_{u}$ with $x\in A$ and $y\notin A$.  Repeating the above argument, we arrive at a contradiction.  We conclude that $A=S_{u}$, as desired.
\end{proof}

\begin{proof}[Proof of Improvement to Theorem \ref{thm:main}]
Combining Lemma \ref{newlemma} with \eqref{bonus} and Lemma \ref{changlemma} we have: if $h\colon\{-1,1\}^{n}\to\{0,1\}$, and if $\alpha\colonequals\Pr{h(X)=1}$, then
\begin{equation}\label{bigbound}
\begin{aligned}
\sum_{k=1}^{n}|\widehat{h}(k)|^{2}&
\leq\min\Big[2\alpha^{2}\log(1/\alpha), \\
&\frac{\alpha}{2}
-\frac{1}{4}\min\Big(
(2\Pr{A})^{1/.69}38^{-1/.69}
,\\
&\qquad
\Big[\frac{\sqrt{4+4(34)(1/2 - 2\Pr{A})} - 2}{68}\Big]^{2/.69}1_{\Pr{A}<1/4}
\\
&\qquad\quad+\Big[\frac{\sqrt{4-4(40)(1/2 - 2\Pr{A})} - 2}{80}\Big]^{2/.69}1_{\Pr{A}\geq1/4}
,\\
&\qquad
\Big[\frac{\sqrt{16+4(32)(1 - 2\Pr{A})} - 4}{64}\Big]^{2/.69}1_{\Pr{A}<1/2}
\Big)\Big]
\end{aligned}
\end{equation}
The last quantity is depicted as the dotted line in Figure \ref{figone}.  

Let $f,g\colon\{-1,1\}^{n}\to[n]$.  Let $\Omega_{f}\subseteq[n]$ be 6 indices (or $n$ indices if $n<6$) $i\in[n]$ with the largest values of $\Pr{f(X)=i}$.  Similarly define $\Omega_{g}\subseteq[n]$.

Suppose for now that $\Pr{g(X)\notin\Omega_{g}}<.0168995$.  After applying a permutation to $[n]$, we may assume that $\Pr{g(X)>6}<.01270673$.  Then we can apply Lemma \ref{lemma7} with $k=7$ and $\epsilon=.01270673$ to get
$$\E{X_{g(Y)}Y_{f(X)}}\leq\frac{1}{2}-(1-4(.01270673))2^{-6}=0.485169170625.$$
As discussed after the statement of Theorem \ref{thm:main}, this proves Theorem \ref{thm:main} with the bound
$$U = (1+.485169173)/4\leq 0.37129229325.$$
We get the same conclusion from Lemma \ref{lemma7} if $\Pr{f(X)\notin\Omega_{f}}<.01270673$.  The only remaining case to consider is:
$$\Pr{f(X)\notin\Omega_{f}}\geq .01270673\quad\text{and}\quad\Pr{g(X)\notin\Omega_{g}}\geq .01270673.$$
In this case, as in Lemma \ref{lemma6} it suffices to upper bound $\|F\|^{2}$ by $.485169173$.  It then suffices to maximize the sum of the right side of \eqref{bigbound}, as in \eqref{meq}, subject to the constraint
$$\sum_{j=7}^{n}\alpha_{j}\geq.01270673,
\qquad \alpha_{1}\geq\cdots\geq\alpha_{n},
\qquad \sum_{j=1}^{n}\alpha_{j}=1.$$
These constraints imply $.01270673\leq \alpha_{j}\leq1/5$ for all $5\leq j\leq 7$.  In this range of values, the smallest value of the right side of \eqref{bigbound} occurs when $\alpha_{j}=.01270673$ for all $5\leq j\leq 7$ (which can be verified numerically).  Also, this value occurs where the right side of \eqref{bigbound} is equal to $2\alpha_{j}^{2}\log(1/\alpha_{j})$ for each $5\leq j\leq 7$. 
If $\alpha_{1},\alpha_{2},\ldots$ maximizes a sum of the terms on the right of \eqref{bigbound} subject to these constraints, we then get an upper bound of the form
$$
\max_{s\in[.01270673,1/5]}
\Big(
(1-3s)/2 
+3\cdot 2 s^{2}\log(1/s)
\Big)
$$
which is achieved as $s=.0168995$ with value
$$
(1-5(.01270673))/2 
+5\cdot 2 (.01270673)^{2}\log(1/.01270673)\leq0.485169173.
$$

%

We therefore improved Theorem \ref{thm:main} from a bound of $.37406$ to a bound of $.37193$.

%
%
\end{proof}









\end{document}